\newtheorem{theorem}{Theorem}[section]
\newtheorem{lemma}[theorem]{Lemma}
\theoremstyle{definition}
\theoremstyle{remark}
\numberwithin{equation}{section}
\numberwithin{equation}{section}
\newsavebox{\savepar}
\begin{document}
\title{\bf{{\Large A study on elliptic PDE involving $p$-harmonic and $p$-biharmonic operator with steep potential well}} }
\author{Ratan Kr Giri, Debajyoti Choudhuri \footnote{Corresponding author, Date: \today.}\,\, and Shesadev Pradhan \\
 {\it{\small Department of Mathematics, National Institute of Technology, Rourkela, India }}\\
{\it{\small e-mails: 512MA1007@nitrkl.ac.in,
choudhurid@nitrkl.ac.in, pradhansh@nitrkl.ac.in}}}

\date{}
\maketitle
\begin{abstract}
\noindent In this paper, we give an existence result pertaining to a
nontrivial solution to the problem
\begin{align*}
\Bigg\{\begin{split}
& \Delta^2_p u -\Delta_p u + \lambda V(x)|u|^{p-2}u  = f(x,u)\,,\,x\in \mathbb{R}^N,\\
& u \in W^{2,p}(\mathbb{R}^N),
\end{split}
\end{align*}
where $p>1$, $\lambda>0$, $V\in C(\mathbb{R}^N,\mathbb{R}^+)$, $f\in
C(\mathbb{R}^N \times \mathbb{R},\mathbb{R})$, $N>2p$. We also
explore the problem in the limiting case of $\lambda \rightarrow
\infty$.
\newline
\textbf{Keywords:} $p$-Laplacian; $p$-biharmonic; elliptic PDE; Sobolev space. \\
\newline
\textbf{2010 AMS Mathematics Subject Classification:} Primary 35J35;
Secondary 35J60, 35J92.
\end{abstract}
\section{Introduction}
The problem we will address in this article is \begin{align}
\Bigg\{\begin{split}
& \Delta^2_p u -\Delta_p u + \lambda V(x)|u|^{p-2}u  = f(x,u)\,,\,x\in \mathbb{R}^N,\\
& u \in W^{2,p}(\mathbb{R}^N), \label{ineq1}
\end{split}
\end{align}
where $\Delta^2_ p u = \Delta(|\Delta u|^{p-2}\Delta u)$, $\Delta_p
u = \nabla \cdot (|\nabla u|^{p-2}\nabla u)$ and $\lambda >0$ is a
parameter with $p>1$, $N>2p$. The potential function $V(x)$ is a
real valued continuous function on $\mathbb{R}^N$ satisfying the
following conditions:
\begin{align*}
\begin{split}
 & \text{(V1)}\text{ $V(x)\geq 0$ on $\mathbb{R}^N$. } \\
& \text{(V2)}\text{ There exists $b>0$ such that the set $V_b = \{x\in \mathbb{R}^n: V(x)<b\}$ is}\\
& \text{nonempty and has finite Lebesgue measure in $\mathbb{R}^N$.}\\
& \text{(V3)} \text{\,\,$\Omega= \text{int}\, V^{-1}(0)$ is nonempty
and has smooth boundary with $\bar{\Omega}=V^{-1}(0)$.}
\end{split}
\end{align*}
This type of assumptions were introduced by Bartsch et al
\cite{pank} (see also \cite{ding}), referred to as the steep well
potential for the potential function $V(x)$, in the study of a
nonlinear Schr\"{o}dinger equation. Further, to study the existence
of nontrivial solution and the limiting case, $\lambda \rightarrow
\infty$, of the problem $\ref{ineq1}$, we make the following
assumptions on the nonlinear function  $f$:
\begin{align*}
\begin{split}
& \text{(F1)} \text{ $f\in C(\mathbb{R}^N \times
\mathbb{R},\mathbb{R})$ and there exists constants $1 <\gamma_1
<\gamma_2 <\cdots <\gamma_m <p$ and}\\
& \text{functions $\xi_i \in
L^{\frac{p}{p-\gamma_i}}(\mathbb{R}^N, \mathbb{R}^+)$ such that}\\
&\text {~~~~~~~~~~~~~~~~~~~~~~~~~~~~~~$|f(x, u)|\leq \sum_{i=1}^m
\gamma_i \xi_i(x) |u|^{\gamma_i-1},
\forall \,(x, u)\in \mathbb{R}^N\times \mathbb{R}.$}\\
 & \text{(F2)} \text{ There exists constants $\eta, \delta >0, \gamma_0
 \in
 (1, p)$ such that}\\
 &\text{~~~~~~~~~~~~~~~~~~~~~~~~~~~~~~$|F(x, u)| \geq \eta |u|^{\gamma_0}$ for all $x\in \Omega$ and for all such $u$ such that $|u|\leq
 \delta$,}\\
 & \text{ where $F(x, u)= \int_0 ^u f(x, s) ds.$}
\end{split}
\end{align*}
In the recent years problems related to the kind in $(\ref{ineq1})$,
for  the case of $p=2$, the equations with biharmonic operator have
been investigated. Readers may refer to \cite{liu, we, shen, zhang,
wei, yin, ye1, yang, ye} and the references there in. The present
work in this article draws its motivation from W. Zhang et al
\cite{zhang}, Ye \& Tang \cite{ye} and Liu et al \cite{liu}. In all
these articles, they have considered the problem $(\ref{ineq1})$ for
$p=2$. We address the problem for $p\neq 2$, $N>2p$. The notion of
$p$-biharmonic operator is introduced in the recent work of Bhakta
\cite{bhakta}. For $p\neq 2$, things seemed to become more
complicated not only due to the lack of linearity of both
$p$-Laplacian and $p$-biharmonic operator but also because of the
fact that the associated energy functional is defined on a Banach
space which is not a Hilbert space. We further have to deal with
lack of compact embedding, since the domain considered here is
$\mathbb{R}^N$.
The main two results proved in this article the following.
\begin{theorem}\label{thm2}
Assume the conditions (V1)-(V3), (F1), (F2) to hold. Then there
exists $\Lambda_0>0$ such that for each $\lambda> \Lambda_0$,
problem $(\ref{ineq1})$ has at least one non trivial solution
$u_\lambda$.
\end{theorem}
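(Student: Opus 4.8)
\medskip
\noindent\textbf{Proof strategy.}\quad
The plan is to obtain $u_\lambda$ by the direct method, realizing it as a global minimizer of the energy functional attached to $(\ref{ineq1})$: since every exponent occurring in (F1) and (F2) is strictly below $p$, the nonlinearity is $p$-sublinear, so the functional will be coercive and bounded below (no mountain--pass or linking geometry is needed), while (F2) makes the infimum negative, which is what forces a \emph{nontrivial} minimizer. Concretely, for $\lambda>0$ set
\[
E_\lambda=\Big\{u\in W^{2,p}(\mathbb{R}^N):\ \int_{\mathbb{R}^N}V(x)|u|^p\,dx<\infty\Big\},\qquad
\|u\|_\lambda^p=\int_{\mathbb{R}^N}\big(|\Delta u|^p+|\nabla u|^p+\lambda V(x)|u|^p\big)\,dx .
\]
Using $V\ge0$, the Calder\'on--Zygmund bound $\|D^2u\|_p\le C\|\Delta u\|_p$, the Sobolev embedding $W^{2,p}(\mathbb{R}^N)\hookrightarrow L^{p^*}(\mathbb{R}^N)$ with $p^*=\frac{Np}{N-2p}$, and condition (V2) (split $\mathbb{R}^N=V_b\cup(\mathbb{R}^N\setminus V_b)$, where $V\ge b$ off $V_b$ while $|V_b|<\infty$, and bound $\int_{V_b}|u|^p$ by H\"older against $\|u\|_{L^{p^*}}$), one checks $\|u\|_{L^p(\mathbb{R}^N)}^p\le C\|u\|_\lambda^p$ with $C$ uniform once $\lambda$ exceeds a fixed value $\Lambda_0$; for $\lambda>\Lambda_0$ this makes $(E_\lambda,\|\cdot\|_\lambda)$ a reflexive (indeed uniformly convex, being isometric to a closed subspace of a product of $L^p$-spaces) Banach space continuously embedded in $W^{2,p}(\mathbb{R}^N)$, with embedding constants into $L^q(\mathbb{R}^N)$ for $p\le q\le p^*$ independent of $\lambda$. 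One then defines
\[
I_\lambda(u)=\frac1p\|u\|_\lambda^p-\int_{\mathbb{R}^N}F(x,u)\,dx ,\qquad u\in E_\lambda ,
\]
and checks via (F1) that $I_\lambda\in C^1(E_\lambda,\mathbb{R})$, its critical points being exactly the weak solutions of $(\ref{ineq1})$.

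Next I would record two elementary facts about $I_\lambda$. Integrating (F1) gives $|F(x,u)|\le\sum_{i=1}^m\xi_i(x)|u|^{\gamma_i}$, so H\"older with the conjugate exponents $\frac{p}{\gamma_i},\frac{p}{p-\gamma_i}$ yields $\int_{\mathbb{R}^N}|F(x,u)|\,dx\le C\sum_{i=1}^m\|\xi_i\|_{L^{p/(p-\gamma_i)}}\|u\|_\lambda^{\gamma_i}$; since each $\gamma_i<p$, the bound $I_\lambda(u)\ge\frac1p\|u\|_\lambda^p-C\sum_i\|u\|_\lambda^{\gamma_i}$ shows $I_\lambda$ is coercive and bounded below on $E_\lambda$. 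For the strict negativity of the infimum, take $\varphi\in C_c^\infty(\Omega)\setminus\{0\}$; since $V\equiv0$ on $\Omega$ by (V3), $\|t\varphi\|_\lambda^p=t^p\int_\Omega(|\Delta\varphi|^p+|\nabla\varphi|^p)\,dx=:A\,t^p$ is independent of $\lambda$, while (F2) gives $\int_\Omega F(x,t\varphi)\,dx\ge\eta\,t^{\gamma_0}\int_\Omega|\varphi|^{\gamma_0}\,dx$ as soon as $t$ is small enough that $|t\varphi|\le\delta$; since $\gamma_0<p$, this makes $I_\lambda(t\varphi)\le\frac{A}{p}t^p-\eta t^{\gamma_0}\int_\Omega|\varphi|^{\gamma_0}\,dx<0$ for all small $t>0$. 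Hence $c_\lambda:=\inf_{E_\lambda}I_\lambda$ satisfies $c_\lambda\le-\kappa<0$ for some $\kappa$ independent of $\lambda$.

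The heart of the argument, and the step I expect to be the main obstacle, is the weak sequential continuity on $E_\lambda$ of the nonlinear functional $J(u):=\int_{\mathbb{R}^N}F(x,u)\,dx$. One cannot simply invoke a compact embedding $E_\lambda\hookrightarrow L^p(\mathbb{R}^N)$: the domain being all of $\mathbb{R}^N$, mass can escape to infinity through a region where $V$ stays bounded, so such an embedding may genuinely fail, and the required compactness must instead be extracted from the integrability of the weights in (F1). Thus, if $u_n\rightharpoonup u$ in $E_\lambda$, then $u_n\to u$ in $L^p_{\mathrm{loc}}(\mathbb{R}^N)$ by Rellich--Kondrachov (note $p<p^*$), while for the tails $\int_{\mathbb{R}^N\setminus B_R}\xi_i|u_n|^{\gamma_i}\le\|\xi_i\|_{L^{p/(p-\gamma_i)}(\mathbb{R}^N\setminus B_R)}\|u_n\|_{L^p}^{\gamma_i}$ is small uniformly in $n$ for $R$ large, because $\xi_i\in L^{p/(p-\gamma_i)}(\mathbb{R}^N)$; a generalized dominated convergence argument on $B_R$ then gives $J(u_n)\to J(u)$, and hence $I_\lambda$ is weakly sequentially lower semicontinuous. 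To conclude, take a minimizing sequence $\{u_n\}\subset E_\lambda$ for $c_\lambda$; it is bounded by coercivity, so up to a subsequence $u_n\rightharpoonup u_\lambda$ in $E_\lambda$ with $I_\lambda(u_\lambda)\le\liminf_n I_\lambda(u_n)=c_\lambda$, forcing $I_\lambda(u_\lambda)=c_\lambda<0=I_\lambda(0)$; thus $u_\lambda\not\equiv0$, and as a minimizer of a $C^1$ functional it is a critical point of $I_\lambda$, i.e. a nontrivial weak solution of $(\ref{ineq1})$. The only additional care is bookkeeping around the non-Hilbertian structure of $E_\lambda$: whenever one needs to upgrade $u_n\rightharpoonup u_\lambda$ to norm convergence, one uses uniform convexity (weak convergence plus convergence of norms gives strong convergence); for the bare minimization above, weak lower semicontinuity already suffices.
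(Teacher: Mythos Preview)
Your argument is correct and follows essentially the same overall outline as the paper --- coercivity from (F1), strict negativity of the infimum from (F2) via a test function supported in $\Omega$, hence a nontrivial minimizer --- but the mechanism you use to show that the infimum $c_\lambda$ is actually attained is genuinely different.

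The paper does \emph{not} argue via weak lower semicontinuity. Instead, having shown $J_\lambda$ bounded below (Lemma~\ref{lem1}), it verifies the Palais--Smale condition for $J_\lambda$ (Lemma~\ref{lem2}) and then invokes the abstract principle (Theorem~\ref{thm1}) that a $C^1$ functional which is bounded below and satisfies (PS) attains its infimum at a critical point. The (PS) verification uses the same tail estimate you record, $\int_{\mathbb{R}^N\setminus B_R}\xi_i|u_n|^{\gamma_i}\le\|\xi_i\|_{L^{p/(p-\gamma_i)}(\mathbb{R}^N\setminus B_R)}\|u_n\|_p^{\gamma_i}$, but pushes further to upgrade weak convergence of a (PS) sequence to strong convergence in $E_\lambda$.

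Your route is more economical for this particular theorem: once $E_\lambda$ is seen to be reflexive and $u\mapsto\int_{\mathbb{R}^N}F(x,u)\,dx$ is shown to be weakly sequentially continuous (your tail-splitting plus local Rellich argument), weak lower semicontinuity of $I_\lambda$ follows from convexity of the leading $\frac1p\|\cdot\|_\lambda^p$ term, and the direct method delivers the minimizer without ever touching (PS) or proving any strong convergence. The paper's approach, on the other hand, yields the stronger byproduct that (PS) holds at \emph{every} level, which would be needed if one wanted further critical points via minimax schemes; for the bare existence of a single nontrivial solution your direct-method argument is both sufficient and cleaner.
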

\begin{theorem}\label{thm3}
Let $u_n = u_{\lambda_n}$ be a solution of the problem
$(\ref{ineq1})$ corresponding to $\lambda=\lambda_n$. If $\lambda_n
\rightarrow \infty$, then
$$||u_n||_{\lambda_n} \leq c,$$ for some $c>0$ and for $p\leq q <p_*$,  $$u_n \rightarrow
\tilde{u}\,\,\mbox{in}\,\,L^q(\mathbb{R}^N)$$ up to a subsequence.
Further, this $\tilde{u}$ is a solution of the problem
\begin{align}
\begin{split}
\Delta ^2_p u - \Delta_p u & = f(x, u),\,\, \mbox{in}\,\,\Omega\\
u & =0,\,\,\mbox{on}\,\,\mathbb{R}^N\setminus \Omega. \label{ineq7}
\end{split}
\end{align}
and $u_n \rightarrow \tilde{u}$ in $W^{2,p}(\mathbb{R}^N)$.
\end{theorem}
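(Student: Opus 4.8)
The plan is to realise $u_n$ as a critical point of $I_\lambda(u)=\frac1p\|u\|_\lambda^p-\int_{\mathbb R^N}F(x,u)$ on the space $E_\lambda=\{u\in W^{2,p}(\mathbb R^N):\int_{\mathbb R^N}V|u|^p<\infty\}$ with $\|u\|_\lambda^p=\int_{\mathbb R^N}(|\Delta u|^p+|\nabla u|^p+\lambda V(x)|u|^p)$, and to use the embedding estimate (standard under (V1)--(V2), and used in the proof of Theorem \ref{thm2}) that there are $\Lambda_0,C_0>0$ with $\|u\|_{W^{2,p}(\mathbb R^N)}\le C_0\|u\|_\lambda$ for all $\lambda\ge\Lambda_0$; we may assume $\lambda_n\ge\Lambda_0$. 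First I would prove the uniform bound: since $u_n$ is a weak solution, testing the equation against $u_n$ gives $\|u_n\|_{\lambda_n}^p=\int_{\mathbb R^N}f(x,u_n)u_n$, and by (F1) and H\"older $\int f(x,u_n)u_n\le\sum_i\gamma_i\|\xi_i\|_{L^{p/(p-\gamma_i)}}\|u_n\|_{L^p}^{\gamma_i}\le C\sum_i\|u_n\|_{\lambda_n}^{\gamma_i}$; as every $\gamma_i<p$ this forces $\|u_n\|_{\lambda_n}\le c$, hence $(u_n)$ is bounded in the reflexive space $W^{2,p}(\mathbb R^N)$ and, along a subsequence, $u_n\rightharpoonup\tilde u$ in $W^{2,p}(\mathbb R^N)$ and a.e. By Fatou, $\int_{\mathbb R^N}V|\tilde u|^p\le\liminf_n\int V|u_n|^p\le\liminf_n\lambda_n^{-1}\|u_n\|_{\lambda_n}^p=0$, so $\tilde u=0$ a.e. on $\{V>0\}$; since $V^{-1}(0)=\bar\Omega$ with $\partial\Omega$ smooth, $\tilde u=0$ a.e. on $\mathbb R^N\setminus\Omega$ and (identifying it with its zero extension) $\tilde u\in W_0^{2,p}(\Omega)$.

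Next I would upgrade to strong $L^q$ convergence for $p\le q<p_*$, $p_*=Np/(N-2p)$. On every ball $B_R$ the embedding $W^{2,p}(B_R)\hookrightarrow L^q(B_R)$ is compact, so $u_n\to\tilde u$ in $L^q(B_R)$. For the tails split $B_R^c=(B_R^c\cap V_b)\cup(B_R^c\setminus V_b)$: on $B_R^c\setminus V_b$ one has $V\ge b$, so $\int_{B_R^c\setminus V_b}|u_n|^p\le(b\lambda_n)^{-1}\|u_n\|_{\lambda_n}^p\to0$, and interpolating against the bounded $L^{p_*}$-norm gives $\int_{B_R^c\setminus V_b}|u_n|^q\to0$; on $B_R^c\cap V_b$, whose measure tends to $0$ as $R\to\infty$ because $|V_b|<\infty$, H\"older with exponent $p_*/q$ gives $\int_{B_R^c\cap V_b}|u_n|^q\le|B_R^c\cap V_b|^{1-q/p_*}\|u_n\|_{L^{p_*}}^q$, uniformly small in $n$ for $R$ large (same bound for $\tilde u$). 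Letting $n\to\infty$ then $R\to\infty$ yields $u_n\to\tilde u$ in $L^q(\mathbb R^N)$.

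The heart of the argument is strong convergence in $W^{2,p}(\mathbb R^N)$. Write $\langle A_\lambda u,v\rangle=\int_{\mathbb R^N}(|\Delta u|^{p-2}\Delta u\,\Delta v+|\nabla u|^{p-2}\nabla u\cdot\nabla v+\lambda V|u|^{p-2}uv)$, so that $A_{\lambda_n}u_n=f(\cdot,u_n)$ in $E_{\lambda_n}^*$. Testing with $u_n-\tilde u$: the right-hand side $\int f(x,u_n)(u_n-\tilde u)\to0$ since by (F1) and H\"older $|\int f(x,u_n)(u_n-\tilde u)|\le\sum_i\gamma_i\|\xi_i\|_{L^{p/(p-\gamma_i)}}\|u_n\|_{L^p}^{\gamma_i-1}\|u_n-\tilde u\|_{L^p}$ and $\|u_n-\tilde u\|_{L^p}\to0$; moreover $\langle A_{\lambda_n}\tilde u,u_n-\tilde u\rangle\to0$, because the potential term vanishes ($\tilde u=0$ on $\{V>0\}$) and $|\Delta\tilde u|^{p-2}\Delta\tilde u,\,|\nabla\tilde u|^{p-2}\nabla\tilde u\in L^{p/(p-1)}$ pair to $0$ against the weakly null $\Delta u_n-\Delta\tilde u$ and $\nabla u_n-\nabla\tilde u$. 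Subtracting, $\langle A_{\lambda_n}u_n-A_{\lambda_n}\tilde u,u_n-\tilde u\rangle\to0$; this equals a sum of three nonnegative terms (by the elementary inequality $(|a|^{p-2}a-|b|^{p-2}b)\cdot(a-b)\ge0$ and $V\ge0$, $\lambda_n>0$), so each term tends to $0$. Applying the standard monotonicity inequalities for the $p$-operator to the first two terms --- $(|a|^{p-2}a-|b|^{p-2}b)\cdot(a-b)\ge c_p|a-b|^p$ when $p\ge2$, and the weaker inequality followed by H\"older when $1<p<2$ --- gives $\Delta u_n\to\Delta\tilde u$ and $\nabla u_n\to\nabla\tilde u$ in $L^p(\mathbb R^N)$, and together with $u_n\to\tilde u$ in $L^p$ this yields $u_n\to\tilde u$ in $W^{2,p}(\mathbb R^N)$. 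This step is where I expect the main obstacle: the nonlinearity of $\Delta_p$ and $\Delta_p^2$ prevents passing weak limits directly, so one must exploit monotonicity of $A_{\lambda_n}$ together with the vanishing of the $\lambda_n$-term on $\{V>0\}$, and the range $1<p<2$ needs the more delicate form of the inequality.

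Finally I would pass to the limit in the weak formulation. For $\varphi\in C_c^\infty(\Omega)$, since $V\equiv0$ on $\Omega$, testing $A_{\lambda_n}u_n=f(\cdot,u_n)$ against $\varphi$ gives $\int_{\Omega}(|\Delta u_n|^{p-2}\Delta u_n\,\Delta\varphi+|\nabla u_n|^{p-2}\nabla u_n\cdot\nabla\varphi)=\int_\Omega f(x,u_n)\varphi$, with no $\lambda_n$-dependence. By the strong $W^{2,p}$ convergence and continuity of the Nemytskii maps, $|\Delta u_n|^{p-2}\Delta u_n\to|\Delta\tilde u|^{p-2}\Delta\tilde u$ and $|\nabla u_n|^{p-2}\nabla u_n\to|\nabla\tilde u|^{p-2}\nabla\tilde u$ in $L^{p/(p-1)}$, while $\int f(x,u_n)\varphi\to\int f(x,\tilde u)\varphi$ by (F1) and the $L^p$-convergence; hence $\int_\Omega(|\Delta\tilde u|^{p-2}\Delta\tilde u\,\Delta\varphi+|\nabla\tilde u|^{p-2}\nabla\tilde u\cdot\nabla\varphi)=\int_\Omega f(x,\tilde u)\varphi$ for all $\varphi\in C_c^\infty(\Omega)$, and by density for all $\varphi\in W_0^{2,p}(\Omega)$. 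Since $\tilde u\in W_0^{2,p}(\Omega)$, this is exactly the statement that $\tilde u$ solves $(\ref{ineq7})$, which completes the proof.
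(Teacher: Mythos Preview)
Your argument is correct, and in several places it is more careful than the paper's own proof; the overall architecture, however, differs in three substantive ways.

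\textbf{Uniform bound.} The paper obtains $\|u_n\|_{\lambda_n}\le c$ by combining the energy upper bound $J_{\lambda_n}(u_n)=c_{\lambda_n}\le\tilde c<0$ (valid because $u_n$ is the global minimiser constructed in Theorem~\ref{thm2}) with the coercivity estimate of Lemma~\ref{lem1}. You instead test the equation against $u_n$ and use (F1). Your route works for \emph{any} critical point, not just the minimiser, so it matches the theorem statement more literally; the paper's route has the advantage of yielding $J_{\lambda_n}(u_n)\le\tilde c<0$, which it later uses to conclude $\tilde u\neq0$ (a conclusion you do not draw, though the theorem as stated does not require it).

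\textbf{$L^q$ convergence.} The paper argues by contradiction via Lions' vanishing lemma: assuming $u_n\not\to\tilde u$ in $L^p$, it produces balls $B_\rho(x_n)$ with $|x_n|\to\infty$ carrying mass $\ge\delta$, and then shows $\|u_n\|_{\lambda_n}^p\ge\lambda_n b\int_{B_\rho(x_n)\setminus V_b}|u_n-\tilde u|^p\to\infty$. You give a direct tail estimate, splitting $B_R^c$ into $V_b$ (small measure) and $\{V\ge b\}$ (potential bound) and interpolating to $L^q$. Both are standard; yours is somewhat more elementary and avoids invoking the concentration--compactness machinery.

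\textbf{$W^{2,p}$ convergence.} Here the approaches genuinely diverge. The paper writes $\|u_n-\tilde u\|_\lambda^p=\langle J'_\lambda(u_n-\tilde u),u_n-\tilde u\rangle+\int f(x,u_n-\tilde u)(u_n-\tilde u)$ and asserts that the first term on the right vanishes because $u_n\rightharpoonup\tilde u$. You instead use the monotonicity of the operator $A_{\lambda_n}$: from $\langle A_{\lambda_n}u_n,u_n-\tilde u\rangle\to0$ and $\langle A_{\lambda_n}\tilde u,u_n-\tilde u\rangle\to0$ (the latter relying crucially on $\tilde u=0$ where $V>0$, so the $\lambda_n$-term disappears) you get $\langle A_{\lambda_n}u_n-A_{\lambda_n}\tilde u,u_n-\tilde u\rangle\to0$, and then the vector inequalities $(|a|^{p-2}a-|b|^{p-2}b)\cdot(a-b)\ge c_p|a-b|^p$ (or the $1<p<2$ variant) force $\Delta u_n\to\Delta\tilde u$ and $\nabla u_n\to\nabla\tilde u$ in $L^p$. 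This monotonicity route is the standard rigorous device for quasilinear $p$-type operators and is a genuine strengthening over the paper's treatment of this step. A related difference is that the paper passes to the limit in the weak formulation \emph{before} proving strong convergence, whereas you do it afterwards, which makes the passage in the nonlinear terms $|\Delta u_n|^{p-2}\Delta u_n$ and $|\nabla u_n|^{p-2}\nabla u_n$ immediate.
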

The paper has been organized as follows. In section 2, we discuss
the notations which will be used in the theorems. In section 3, we
give the proof of Theorem $\ref{thm2}$ and in section 4, we prove
the Theorem $\ref{ineq7}$.
\section{Preliminaries and Notations}
We will denote a Sobolev space of order $2$ as
$W^{2,p}(\mathbb{R}^N)$, which is given by
$$W^{2,p}(\mathbb{R}^N) =\{ u\in L^p(\mathbb{R}^N): |\nabla u|,
\Delta u \in L^p(\mathbb{R}^N)\}$$ endowed with the norm
$$ ||u||_{W^{2,p}(\mathbb{R}^N)} ^p =
\int_{\mathbb{R}^N}(|\Delta u|^p + |\nabla u|^p + |u|^p)dx.$$  Let
$$X = \left\{ u\in W^{2,p}(\mathbb{R}^N): \int_{\mathbb{R}^n}(|\Delta
u|^p + |\nabla u|^p + V(x)|u|^p)dx <\infty\right\}$$ be endowed with
the norm
$$ ||u||^p =
\int_{\mathbb{R}^n}(|\Delta u|^p + |\nabla u|^p + V(x)|u|^p)dx.$$
For $\lambda>0$, we set
$$E_\lambda = \{ u\in W^{2,p}(\mathbb{R}^N): \int_{\mathbb{R}^N}(|\Delta
u|^p + |\nabla u|^p + \lambda V(x)|u|^p)dx <\infty\}$$ with
$$||u||^p_\lambda = \int_{\mathbb{R}^N}(|\Delta u|^p + |\nabla u|^p +
\lambda V(x)|u|^p)dx.$$ It is easy to verify that $(E_\lambda, ||
\cdot||_\lambda)$ is a closed in $W^{2,p}(\mathbb{R}^N)$ and
$$||u|| \leq ||u||_\lambda$$ for any $\lambda\geq
1$. We will denote $\mu$ to be the Lebesgue measure on
$\mathbb{R}^N$.
\begin{lemma}\label{lem3}
If \text{(V1)-(V2)} hold, then there exists positive constants
$\lambda_0, c_0$ such that $$||u||_{W^{2,p}(\mathbb{R}^N)} \leq c_0
||u||_\lambda;\,\,\,\mbox{for\,\,all}\,\,u\in E_\lambda, \lambda\geq
\lambda_0.$$
\end{lemma}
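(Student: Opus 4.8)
The plan is to reduce the claim to an estimate of $\int_{\mathbb{R}^N}|u|^p\,dx$ in terms of $\|u\|_\lambda^p$: once $\int_{\mathbb{R}^N}|u|^p\,dx\le C\,\|u\|_\lambda^p$ is available, the lemma follows at once, since $\int_{\mathbb{R}^N}(|\Delta u|^p+|\nabla u|^p)\,dx\le\|u\|_\lambda^p$ by (V1) (because $\lambda V|u|^p\ge 0$). The analytic input I would use is the \emph{homogeneous} second order Sobolev inequality: since $N>2p$, with $p^*=\tfrac{Np}{N-2p}$ there is $S>0$ with
\[
\|u\|_{L^{p^*}(\mathbb{R}^N)}^p\le S\int_{\mathbb{R}^N}\bigl(|\Delta u|^p+|\nabla u|^p\bigr)\,dx\qquad\text{for every }u\in W^{2,p}(\mathbb{R}^N),
\]
which one obtains from the Gagliardo--Nirenberg--Sobolev inequality $\|u\|_{L^{p^*}}\le C\|D^2u\|_{L^p}$ together with the Calder\'on--Zygmund bound $\|D^2u\|_{L^p}\le C'\|\Delta u\|_{L^p}$, valid for $1<p<\infty$.

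With $b$ as in (V2), I would set $\lambda_0=\max\{1,1/b\}$, fix $\lambda\ge\lambda_0$, and split $\mathbb{R}^N=V_b\cup(\mathbb{R}^N\setminus V_b)$. On the complement $V(x)\ge b$, so
\[
\int_{\mathbb{R}^N\setminus V_b}|u|^p\,dx\le\frac{1}{\lambda b}\int_{\mathbb{R}^N}\lambda V(x)|u|^p\,dx\le\frac{1}{\lambda b}\,\|u\|_\lambda^p\le\|u\|_\lambda^p,
\]
the last inequality because $\lambda\ge\lambda_0\ge 1/b$. On $V_b$, which has finite Lebesgue measure $\mu(V_b)$ by (V2), H\"older's inequality with exponents $p^*/p$ and $p^*/(p^*-p)$ gives $\int_{V_b}|u|^p\,dx\le\mu(V_b)^{2p/N}\,\|u\|_{L^{p^*}(\mathbb{R}^N)}^p$ (using $1-p/p^*=2p/N$), and the Sobolev inequality above then yields
\[
\int_{V_b}|u|^p\,dx\le S\,\mu(V_b)^{2p/N}\int_{\mathbb{R}^N}\bigl(|\Delta u|^p+|\nabla u|^p\bigr)\,dx\le S\,\mu(V_b)^{2p/N}\,\|u\|_\lambda^p.
\]

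Adding the two contributions and invoking once more $\int_{\mathbb{R}^N}(|\Delta u|^p+|\nabla u|^p)\,dx\le\|u\|_\lambda^p$, I get
\[
\|u\|_{W^{2,p}(\mathbb{R}^N)}^p=\int_{\mathbb{R}^N}\bigl(|\Delta u|^p+|\nabla u|^p+|u|^p\bigr)\,dx\le\bigl(2+S\,\mu(V_b)^{2p/N}\bigr)\|u\|_\lambda^p,
\]
so the lemma holds with $\lambda_0=\max\{1,1/b\}$ and $c_0=\bigl(2+S\,\mu(V_b)^{2p/N}\bigr)^{1/p}$, using only (V1)--(V2) as stated. I expect the one genuinely delicate point to be precisely the choice of Sobolev inequality: condition (V2) bounds $\mu(V_b)$ only qualitatively (it may well be large), so the $V_b$ term must be absorbed into $\int_{\mathbb{R}^N}(|\Delta u|^p+|\nabla u|^p)\,dx\le\|u\|_\lambda^p$ via the \emph{homogeneous} estimate; using instead the ordinary continuous embedding $W^{2,p}(\mathbb{R}^N)\hookrightarrow L^{p^*}(\mathbb{R}^N)$ would put $\int_{\mathbb{R}^N}|u|^p\,dx$ back on the right-hand side and force a smallness assumption on $\mu(V_b)$ that is not available.
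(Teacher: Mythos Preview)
Your argument is correct and follows the same strategy as the paper: split $\int_{\mathbb{R}^N}|u|^p\,dx$ into $V_b$ and $\mathbb{R}^N\setminus V_b$, control the latter via $V\ge b$, and control the former via H\"older plus a homogeneous Sobolev inequality. The only substantive difference is that the paper uses the \emph{first-order} Sobolev inequality $\|u\|_{L^{p^*}}^p\le S_\alpha^{-1}\int_{\mathbb{R}^N}|\nabla u|^p\,dx$ with $p^*=\tfrac{Np}{N-p}$ (which already suffices, since $N>2p>p$, and avoids the Calder\'on--Zygmund step), whereas you invoke the second-order inequality with exponent $\tfrac{Np}{N-2p}$; the paper also chooses $\lambda_0$ slightly differently so that $c_0$ comes out independent of $\lambda$, but your choice $\lambda_0=\max\{1,1/b\}$ achieves the same.
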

\begin{proof}
By using conditions (V1)-(V2) and the Sobolev inequality, we have
\begin{align*}
||u||_{W^{2,p}(\mathbb{R}^N)} & = \int_{\mathbb{R}^N}(|\Delta u|^p
+ |\nabla u|^p + |u|^p)dx \\
& =  \int_{\mathbb{R}^N}(|\Delta u|^p + |\nabla u|^p )dx +
\int_{V_b}|u|^pdx + \int_{\mathbb{R}^N\setminus V_b} |u|^pdx\\
& \leq \int_{\mathbb{R}^N}(|\Delta u|^p + |\nabla u|^p )dx +
\left(\mu(V_b)\right)^{\frac{P^* -
p}{p*}}\left(\int_{\mathbb{R}^N}|u|^{p^*}dx\right)^{\frac{p}{p^*}}
+\int_{\mathbb{R}^N\setminus V_b} |u|^pdx\\
& \leq \int_{\mathbb{R}^N}(|\Delta u|^p + |\nabla u|^p )dx +
S_{\alpha}^{-1}\left(\mu(V_b)\right)^{\frac{P^* -
p}{p*}}\int_{\mathbb{R}^N} |\nabla u|^p dx + \frac{1}{\lambda
b}\int_{\mathbb{R}^N\setminus
V_b} \lambda V(x)|u|^pdx\\
& \leq \int_{\mathbb{R}^N}(|\Delta u|^p + (1+
S_{\alpha}^{-1}\left(\mu(V_b)\right)^{\frac{P^* - p}{p*}})|\nabla
u|^p )dx
+\frac{1}{\lambda b}\int_{\mathbb{R}^N} V(x)|u|^pdx\\
&\leq \max \left\{1,
1+S_{\alpha}^{-1}\left(\mu(V_b)\right)^{\frac{P^* - p}{p*}},
\frac{1}{\lambda b}\right\} \int_{\mathbb{R^N}} (|\Delta u|^p +
|\nabla u|^p + \lambda V(x)|u|^p)dx.
\end{align*}
where $S_\alpha$ denote the Sobolev constant, $p^* =
\frac{Np}{N-p}$. Take $\lambda_0 = \frac{1}{b \left(1+
S_\alpha^{-1}(\mu(V_b))^{\frac{P^* - p}{p*}}\right)}$. Then for all
$\lambda\geq \lambda_0$, we have
\begin{align*}
 \max \left\{1, 1+S_\alpha^{-1}\left(\mu(V_b)\right)^{\frac{P^* - p}{p*}},
\frac{1}{\lambda b}\right\} & =\max \left\{1,
1+S_\alpha^{-1}\left(\mu(V_b)\right)^{\frac{P^* -
p}{p*}}\right\}\\
& = c_0\,\,\mbox{(say)}.
\end{align*}
Hence for all $\lambda \geq \lambda_0$  and $u \in E_\lambda$, we
have $||u||_{W^{2,p}(\mathbb{R}^N)} \leq c_0 ||u||_\lambda.$
\end{proof}
The lemma shows that $E_\lambda \hookrightarrow
W^{2,p}(\mathbb{R}^N)$. By the Sobolev embedding results for $p<N$
we have $W^{2,p}(\mathbb{R}^N)\hookrightarrow L^q(\mathbb{R^N})$,
for $q\in [p, p_*]$. Hence there exists $c_q>0$ such that
$$||u||_q \leq c_q ||u||_{W^{2,p}(\mathbb{R}^N)} \leq c_0c_q
||u||_\lambda,$$ for all $\lambda \geq \lambda_0$, $q\in [p, p_*]$.
\section{Existence of non trivial solutions}

Let $$J_\lambda (u) = \frac{1}{p}\int_{\mathbb{R}^N} (|\Delta u|^p +
|\nabla u|^p + \lambda V(x) |u|^p)dx - \int_{\mathbb{R}^N} F(x,
u)dx.$$ Then it can be seen that $J_\lambda \in C^1(E_\lambda,
\mathbb{R})$ and its Fr\'{e}chet derivative is given by
$$<J_\lambda '(u), v> = \int_{\mathbb{R}^N}(|\Delta u|^{p-2}\Delta u\Delta v + |\nabla u|^{p-2}\nabla u\cdot \nabla v + \lambda V(x) |u|^{p-2}u v)dx
-\int_{\mathbb{R}^N} f(x, u)vdx,$$ for all $v\in E_\lambda$. Thus
$u\in E_\lambda$ is a critical point of $J_\lambda$ if and only if
it is a weak solution of the problem ($\ref{ineq1}$). In order to
prove the existence of non trivial solutions of the problem
($\ref{ineq1}$) we use the following theorem \cite{rabi}
\begin{theorem}\label{thm1}.
Let $B$ be a real Banach space and $J\in C^1(B, \mathbb{R})$ satisfy
the Palais-Smale (PS) condition. If $J$ is bounded below, then
$\displaystyle {c= \inf _B J}$ is a critical value of $J$.
\end{theorem}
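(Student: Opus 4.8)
The plan is to derive this classical statement from Ekeland's variational principle together with the Palais--Smale hypothesis. First, since $J$ is bounded below on $B$, the number $c=\inf_B J$ is finite, so there is something to prove; the goal is to produce a point at which $J$ attains the value $c$ and has vanishing derivative.

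The core step is to manufacture a Palais--Smale sequence at the level $c$. Choose $\varepsilon_n\downarrow 0$. For each $n$ pick $w_n\in B$ with $J(w_n)<c+\varepsilon_n^{2}$, and apply Ekeland's variational principle to the functional $J$ (which is continuous, hence lower semicontinuous, and bounded below): this yields $u_n\in B$ with
\[
J(u_n)\le J(w_n)\le c+\varepsilon_n^{2},\qquad J(v)\ge J(u_n)-\varepsilon_n\,\|v-u_n\|\ \ \text{for every } v\in B .
\]
The first inequality forces $J(u_n)\to c$. From the second, taking $v=u_n+th$ with $\|h\|=1$ and $t>0$, dividing by $t$, and letting $t\to 0^{+}$ (using that $J$ is Fr\'echet differentiable) gives $\langle J'(u_n),h\rangle\ge-\varepsilon_n$; applying this both to $h$ and to $-h$ and taking the supremum over unit vectors $h$ shows $\|J'(u_n)\|_{B^{*}}\le\varepsilon_n\to 0$. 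Hence $(u_n)$ is a $(PS)$ sequence at level $c$.

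Now I invoke the hypothesis: since $J$ satisfies $(PS)$, the sequence $(u_n)$ has a subsequence $u_{n_k}\to u$ strongly in $B$. Because $J\in C^1(B,\mathbb{R})$, both $J$ and $J'$ are continuous, so passing to the limit yields $J(u)=c$ and $J'(u)=0$. Thus $u$ realizes the infimum and is a critical point of $J$, i.e., $c$ is a critical value of $J$.

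The only genuine obstacle is Ekeland's variational principle itself; granting it, the argument above is routine. For a self-contained treatment one would instead construct the desired sequence directly, using the partial order on $B$ defined by $u\preceq v\iff J(u)+\varepsilon\|u-v\|\le J(v)$ together with an inductive selection of $\preceq$-smaller points and a completeness argument --- this is the technical heart of Ekeland's theorem. The one place where the regularity of $J$ is really used is the passage to the limit in the derivative: continuity of $J'$, guaranteed by the $C^1$ assumption, is exactly what delivers $J'(u)=0$.
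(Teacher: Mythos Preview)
Your argument via Ekeland's variational principle is correct and is exactly the standard route to this classical result. Note, however, that the paper does not supply its own proof of this theorem: it is quoted as a known tool, with a reference to Rabinowitz's monograph \cite{rabi}, and then applied to the functional $J_\lambda$ after Lemmas~\ref{lem1} and~\ref{lem2} establish coercivity and the (PS) condition. So there is no ``paper's proof'' to compare against; you have simply filled in what the authors cite. Your use of Ekeland to produce a (PS) sequence at the infimum level, followed by the (PS) hypothesis to extract a strongly convergent subsequence and the $C^1$ regularity to pass to the limit in $J'$, is the textbook derivation and matches what one finds in \cite{rabi} or \cite{willem}.
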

We now prove the following lemmas.
\begin{lemma}\label{lem1}
Suppose that (V1)-(V3), (F1),(F2) are satisfied. Then there exists
$\Lambda_0 >0$ such that for every $\lambda\geq \Lambda_0$,
$J_\lambda$ is bounded below in $E_\lambda$.
\end{lemma}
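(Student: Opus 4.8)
The plan is to estimate the nonlinear term $\int_{\mathbb{R}^N} F(x,u)\,dx$ from above using (F1), and absorb it into the leading quadratic-type term $\frac1p\|u\|_\lambda^p$ at the cost of choosing $\lambda$ large. First I would integrate the pointwise bound in (F1): since $|f(x,u)|\le \sum_{i=1}^m \gamma_i \xi_i(x)|u|^{\gamma_i-1}$, integrating in $s$ from $0$ to $u$ gives $|F(x,u)|\le \sum_{i=1}^m \xi_i(x)|u|^{\gamma_i}$. Hence
\[
\int_{\mathbb{R}^N} F(x,u)\,dx \;\le\; \sum_{i=1}^m \int_{\mathbb{R}^N}\xi_i(x)|u|^{\gamma_i}\,dx.
\]
To each summand I would apply Hölder's inequality with exponents $\frac{p}{p-\gamma_i}$ and $\frac{p}{\gamma_i}$, which is legitimate because $\xi_i\in L^{p/(p-\gamma_i)}(\mathbb{R}^N)$ by (F1); this yields $\int \xi_i|u|^{\gamma_i}\,dx \le \|\xi_i\|_{p/(p-\gamma_i)}\,\|u\|_p^{\gamma_i}$. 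Then, using the embedding chain established right after Lemma \ref{lem3} — namely $\|u\|_p\le c_0 c_p\|u\|_\lambda$ for all $\lambda\ge\lambda_0$ — I get
\[
\int_{\mathbb{R}^N} F(x,u)\,dx \;\le\; \sum_{i=1}^m \|\xi_i\|_{p/(p-\gamma_i)}\,(c_0c_p)^{\gamma_i}\,\|u\|_\lambda^{\gamma_i} \;=\; \sum_{i=1}^m C_i\,\|u\|_\lambda^{\gamma_i},
\]
with $C_i$ independent of $\lambda$ (for $\lambda\ge\lambda_0$). Therefore
\[
J_\lambda(u)\;\ge\;\frac1p\|u\|_\lambda^p-\sum_{i=1}^m C_i\|u\|_\lambda^{\gamma_i}.
\]

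Now set $t=\|u\|_\lambda\ge 0$ and consider $g(t)=\frac1p t^p-\sum_{i=1}^m C_i t^{\gamma_i}$. Since every $\gamma_i<p$, each term $t^{\gamma_i}$ is dominated by $t^p$ as $t\to\infty$, so $g(t)\to+\infty$; and $g$ is continuous with $g(0)=0$, hence $g$ attains a finite minimum on $[0,\infty)$. This shows $J_\lambda$ is bounded below on $E_\lambda$. Note that the lower bound obtained this way is actually uniform in $\lambda\ge\lambda_0$ because the constants $C_i$ are, which will be convenient for the limiting analysis in Section 4.

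The only subtlety — and the place to be careful — is the dependence on $\lambda$. One must take $\Lambda_0 := \lambda_0$ (the constant from Lemma \ref{lem3}) so that the embedding constant $c_0c_p$ and hence the $C_i$ are valid; for $\lambda<\lambda_0$ the estimate $\|u\|_p\le c_0c_p\|u\|_\lambda$ need not hold with these constants. There is no genuine obstacle here: the elementary inequality $\frac1p t^p\ge \sum C_i t^{\gamma_i} - M$ for a suitable $M=M(C_1,\dots,C_m,\gamma_1,\dots,\gamma_m,p)$ does all the work, and Young's inequality (splitting each $C_i t^{\gamma_i}\le \varepsilon t^p + C_\varepsilon$) gives an explicit such $M$ if one wants it. The main point to verify carefully is simply that $J_\lambda\in C^1(E_\lambda,\mathbb{R})$ with the stated derivative, which is needed to invoke Theorem \ref{thm1} afterwards, but for the boundedness-below claim itself the argument above suffices.
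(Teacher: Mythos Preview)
Your proof is correct and follows essentially the same route as the paper: bound $|F(x,u)|$ via (F1), apply H\"older with exponents $p/(p-\gamma_i)$ and $p/\gamma_i$, use the embedding $\|u\|_p\le c_0c_p\|u\|_\lambda$ from Lemma~\ref{lem3}, and conclude coercivity from $\gamma_i<p$. The only cosmetic difference is that the paper sets $\Lambda_0=\max\{1,\lambda_0\}$ rather than $\lambda_0$, and your write-up is somewhat more explicit about the one-variable function $g(t)$ and the uniformity of the lower bound in $\lambda$.
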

\begin{proof}
Using the H\"{o}lder's inequality we have,
\begin{align*}
J_\lambda(u) & = \frac{1}{p} ||u||_\lambda ^p
-\int_{\mathbb{R}^N}F(x,
u)dx\\
& \geq \frac{1}{p}||u||_\lambda ^p - \sum_{i=1}^m
\int_{\mathbb{R}^N} \xi_i(x) |u|^{\gamma_i} dx\\
& \geq \frac{1}{p}||u||_\lambda ^p - \sum_{i=1}^m
\left(\int_{\mathbb{R}^N}
|\xi_i(x)|^{\frac{p}{p-\gamma_i}}dx\right)^{\frac{p-\gamma_i}{p}}\left(\int_{\mathbb{R}^N}|u|^pdx\right)^{\frac{\gamma_i}{p}}\\
& \geq \frac{1}{p} ||u||_\lambda ^p - \sum_{i=1}^m
c_p^{\gamma_i}c_0^{\gamma_i}
||\xi||_{\frac{p}{p-\gamma_i}}||u||_\lambda ^{\gamma_i}.
\end{align*}
Since $1<\gamma_1<\cdots <\gamma_m <p$, the above inequality implies
that $J_\lambda(u) \rightarrow +\infty$, when $||u||_\lambda
\rightarrow +\infty$. Consequently, there exists $\Lambda_0 = \max
\{1, \lambda_0\}>0$ such that for every $\lambda \geq \Lambda_0$,
$J_\lambda$ is bounded from below.
\end{proof}
\begin{lemma}\label{lem2}
Assume that the conditions (V1)-(V3), (F1), (F2) are satisfied. Then
$J_\lambda$ satisfies the Palais-Smale (PS) condition for every
$\lambda \geq \Lambda_0$.
\end{lemma}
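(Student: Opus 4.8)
The plan is to verify the (PS) condition directly. Let $(u_n)\subset E_\lambda$ satisfy $J_\lambda(u_n)\to c$ for some $c\in\mathbb{R}$ and $J_\lambda'(u_n)\to 0$ in $E_\lambda^*$; the goal is to extract a strongly convergent subsequence. First I would note that $(u_n)$ is bounded in $E_\lambda$: the estimate established in the proof of Lemma \ref{lem1}, namely $J_\lambda(u)\geq \tfrac1p\|u\|_\lambda^p-\sum_{i=1}^m c_p^{\gamma_i}c_0^{\gamma_i}\|\xi_i\|_{p/(p-\gamma_i)}\|u\|_\lambda^{\gamma_i}$ together with $\gamma_1<\cdots<\gamma_m<p$, shows that $J_\lambda$ is coercive, so $\|u_n\|_\lambda\leq C$. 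Since $p>1$, $E_\lambda$ is a reflexive Banach space, so along a subsequence $u_n\rightharpoonup u$ in $E_\lambda$; by the continuous embedding of Lemma \ref{lem3}, $u_n\rightharpoonup u$ also in $W^{2,p}(\mathbb{R}^N)$, hence, by the Rellich--Kondrachov theorem, $u_n\to u$ in $L^q_{loc}(\mathbb{R}^N)$ for every $q\in[p,p_*)$ and $u_n\to u$ almost everywhere; in particular $u_n-u\rightharpoonup 0$ in $L^p(\mathbb{R}^N)$.

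The crucial point is then $\int_{\mathbb{R}^N}\big(f(x,u_n)-f(x,u)\big)(u_n-u)\,dx\to 0$. For the part involving $f(x,u)$, condition (F1) and H\"older's inequality (using $\xi_i\in L^{p/(p-\gamma_i)}$ and $|u|^{\gamma_i-1}\in L^{p/(\gamma_i-1)}$) give $f(x,u)\in L^{p/(p-1)}(\mathbb{R}^N)$, so $\int_{\mathbb{R}^N} f(x,u)(u_n-u)\,dx\to 0$ because $u_n-u\rightharpoonup 0$ in $L^p(\mathbb{R}^N)$. For the part involving $f(x,u_n)$, (F1) and H\"older's inequality with the exponents $\tfrac{p}{p-\gamma_i},\tfrac{p}{\gamma_i-1},p$ yield, on a ball $B_R$, $\int_{B_R}\xi_i|u_n|^{\gamma_i-1}|u_n-u|\,dx\leq \|\xi_i\|_{p/(p-\gamma_i)}\|u_n\|_p^{\gamma_i-1}\|u_n-u\|_{L^p(B_R)}\to 0$ by the local strong convergence, while the same bound over $\mathbb{R}^N\setminus B_R$ is at most $\|\xi_i\|_{L^{p/(p-\gamma_i)}(\mathbb{R}^N\setminus B_R)}$ times a constant, which can be made arbitrarily small by choosing $R$ large since $\xi_i\in L^{p/(p-\gamma_i)}(\mathbb{R}^N)$. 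Letting $R\to\infty$ gives $\int_{\mathbb{R}^N} f(x,u_n)(u_n-u)\,dx\to 0$. I regard this as the main obstacle: since $\mathbb{R}^N$ offers no global compact Sobolev embedding, the integrability (tightness at infinity) of the weights $\xi_i$ is exactly what makes the nonlinearity behave compactly along the sequence.

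To finish, observe that $\langle J_\lambda'(u_n),u_n-u\rangle\to 0$ (because $J_\lambda'(u_n)\to 0$ in $E_\lambda^*$ and $\|u_n-u\|_\lambda$ is bounded) and $\langle J_\lambda'(u),u_n-u\rangle\to 0$ (because $u_n-u\rightharpoonup 0$ in $E_\lambda$ and $J_\lambda'(u)$ is a fixed element of $E_\lambda^*$). Subtracting these and using the previous paragraph,
\[
\int_{\mathbb{R}^N}\!\big(|\Delta u_n|^{p-2}\Delta u_n-|\Delta u|^{p-2}\Delta u\big)(\Delta u_n-\Delta u)\,dx\ \to\ 0,
\]
and likewise the analogous integral with $\nabla$ in place of $\Delta$ and the one carrying the weight $\lambda V(x)$ tend to $0$, because each of the three integrands is nonnegative by the monotonicity inequality $(|a|^{p-2}a-|b|^{p-2}b)\cdot(a-b)\geq 0$ and their sum tends to $0$. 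I would then apply the quantitative monotonicity inequalities --- $(|a|^{p-2}a-|b|^{p-2}b)\cdot(a-b)\geq c_p|a-b|^p$ when $p\geq 2$, and $(|a|^{p-2}a-|b|^{p-2}b)\cdot(a-b)\geq c_p|a-b|^2(|a|+|b|)^{p-2}$ when $1<p<2$ (the latter combined with one more H\"older step, using the boundedness of $\Delta u_n,\nabla u_n,V^{1/p}u_n$ in $L^p$) --- to conclude that $\int|\Delta u_n-\Delta u|^p\,dx\to 0$, $\int|\nabla u_n-\nabla u|^p\,dx\to 0$ and $\lambda\int V|u_n-u|^p\,dx\to 0$, that is, $\|u_n-u\|_\lambda\to 0$. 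Thus $(u_n)$ admits a strongly convergent subsequence in $E_\lambda$, which is the (PS) condition. Besides the treatment of the nonlinearity, the only other delicate point is the $1<p<2$ case of this last step, where the monotonicity inequality controls only a weighted quantity and the auxiliary H\"older argument is needed to recover convergence in the $\|\cdot\|_\lambda$-norm.
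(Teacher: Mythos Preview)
Your argument is correct and follows the standard $(S_+)$-type route for $p$-Laplacian operators, which differs from the paper's proof in a substantive way. The paper evaluates both the nonlinearity and the derivative at the \emph{difference}: it shows $\int_{\mathbb{R}^N}|f(x,u_n-u_0)||u_n-u_0|\,dx\to 0$ (by the same tail-of-$\xi_i$/local-compactness split you use) and then appeals to the identity $\|u_n-u_0\|_\lambda^p=\langle J_\lambda'(u_n-u_0),u_n-u_0\rangle+\int f(x,u_n-u_0)(u_n-u_0)\,dx$, asserting that $\langle J_\lambda'(u_n-u_0),u_n-u_0\rangle\to 0$ ``since $u_n\rightharpoonup u_0$''. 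You instead pair $J_\lambda'(u_n)-J_\lambda'(u)$ with $u_n-u$, estimate $\int\big(f(x,u_n)-f(x,u)\big)(u_n-u)\,dx$, and then invoke the quantitative monotonicity inequalities for $a\mapsto|a|^{p-2}a$ (with the extra H\"older step when $1<p<2$). The advantage of your approach is rigor: for $p\neq 2$ the operator is nonlinear, and the paper's claim that weak convergence of $u_n-u_0$ forces $\langle J_\lambda'(u_n-u_0),u_n-u_0\rangle\to 0$ is not justified---indeed, unpacking that pairing shows it equals $\|u_n-u_0\|_\lambda^p-\int f(x,u_n-u_0)(u_n-u_0)\,dx$, so the assertion presupposes the conclusion. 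Your monotonicity argument closes this gap cleanly and is the appropriate tool in the quasilinear setting.
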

\begin{proof}
Suppose that $(u_n) \subset E_\lambda$ be a sequence such that
$J_\lambda (u_n)$ is bounded and $J_\lambda '(u_n)\rightarrow 0$ as
$n\rightarrow \infty$. Then by lemma $\ref{lem1}$, $(u_n)$ is
bounded below in $E_\lambda$. Thus there exists a constant $c>0$
such that for all $n\in \mathbb{N}$,
$$||u_n||_q \leq c_q c_0 ||u_n||_\lambda \leq c;
\,\,\mbox{for\,\,all}\,\,u\in E_\lambda, \lambda \geq \lambda_0,
p\leq q\leq p_*.$$ Hence by Eberlein-Smulian theorem, passing on to
a subsequence (the subsequence is still denoted by $u_n$), we may
assume that $u_n \rightharpoonup u_0$ in $E_\lambda$. Since the
inclusion $E_\lambda \hookrightarrow L^q_{loc}(\mathbb{R}^N)$ is
compact for $q \in [p, p_*)$, we have
$$u_n \rightarrow u_0\,\,\,\mbox{in}\,\,L^p_{loc}(\mathbb{R}^N).$$
Since $\xi_i(x) \in L^{\frac{p}{p-\gamma_i}}(\mathbb{R}^N,
\mathbb{R}^+)$, we can choose $R_\epsilon >0$ such that
\begin{eqnarray}
\left(\int_{\mathbb{R}^N\setminus B_{R_\epsilon}}
|\xi_i(x)|^{\frac{p}{p-\gamma_i}}dx\right)^{\frac{p-\gamma_i}{p}} <
\epsilon,\,\,\,1\leq i\leq m.\label{ineq2}
\end{eqnarray}
Since $u_n \rightarrow u_0$ in $L^p_{loc}(\mathbb{R}^N)$, there
exists $N_0\in \mathbb{N}$ such that
\begin{eqnarray}
\left(\int_{B_{R_\epsilon}}
|u_n-u_0|^pdx\right)^{\frac{\gamma_i}{p}} <\epsilon \label{ineq3}
\end{eqnarray}
for $n\geq N_0$ and for all $1\leq i\leq m$. By $(\ref{ineq3})$ and
H\"{o}lder inequality, we have,
\begin{align*}
 \int_{B_{R_\epsilon}} |f(x, u_n-u_0)||u_n-u_0|dx & \leq
\sum_{i=1}^m \gamma_i \int_{B_{R_\epsilon}}
|\xi_i(x)||u_n-u_0|^{\gamma_i} dx\\
& \leq \sum_{i=1}^m \gamma_i \left(\int_{B_{R_\epsilon}}
|\xi_i(x)|^{\frac{p}{p-\gamma_i}}dx\right)^{\frac{p-\gamma_i}{p}}\left(\int_{B_{R_\epsilon}}
|u_n-u_0|^pdx\right)^{\frac{\gamma_i}{p}}\\
& \leq \left(\sum_{i=1}^m \gamma_i
||\xi_i||_{\frac{p}{p-\gamma_i}}\right)\epsilon, \forall n \geq N_0.
\end{align*}
Hence it follows that
\begin{eqnarray}
\int_{B_{R_\epsilon}} |f(x, u_n-u_0)||u_n-u_0|dx \rightarrow 0,
\,\,\,\mbox{as}\,\,n\rightarrow \infty. \label{ineq4}
\end{eqnarray}
On the other hand, by $(\ref{ineq2})$ and boundedness of $(u_n)$ in
$L^p(\mathbb{R}^N)$ we have,
\begin{align*}
\int_{\mathbb{R}^N\setminus B_{R_\epsilon}}|f(x,
u_n-u_0)||u_n-u_0|dx & \leq \sum_{i=1}^m \gamma_i
||\xi_i||_{\frac{p}{p-\gamma_i},\mathbb{R}^N\setminus
B_{R_\epsilon}} ||u_n-u_0||_{p, \mathbb{R}^N\setminus
B_{R_\epsilon}}^{\gamma_i}\\
& \leq \epsilon \sum_{i=1}^m \gamma_i ||u_n -u_0||_p^{\gamma_i}\\
& \leq \epsilon \sum_{i=1}^m \gamma_i \left(||u_n||_p +
||u_0||_p\right)^\gamma_i\\
&\leq \epsilon \sum_{i=1}^m \gamma_i \left(c +
||u_0||_p\right)^{\gamma_i}.
\end{align*}
Therefore,
\begin{eqnarray}
\int_{\mathbb{R}^N\setminus B_{R_\epsilon}}|f(x,
u_n-u_0)||u_n-u_0|dx \rightarrow 0,\,\,\mbox{as}\,\,n\rightarrow
\infty. \label{ineq5}
\end{eqnarray}
Combining $(\ref{ineq4})$ and $(\ref{ineq5})$, we have
\begin{eqnarray}
\int_{\mathbb{R}^N} |f(x, u_n-u_0)||u_n-u_0|dx \rightarrow
0,\,\,\mbox{as}\,\,n\rightarrow \infty.\label{ineq6}
\end{eqnarray}
Since $u_n \rightharpoonup u_0$ in $E_\lambda$, hence
$\left<J'_\lambda(u_n -u_0), u_n-u_0\right> \rightarrow 0$ as
$n\rightarrow \infty$. But
\begin{align*}
0 \leq ||u_n -u_0||_\lambda ^p & =\left<J'_\lambda(u_n -u_0),
u_n-u_0\right> +
\int_{\mathbb{R}^N} (f(x, u_n-u_0))(u_n-u_0)dx\\
& \leq \,\left<J'_\lambda(u_n -u_0), u_n-u_0\right> +
\int_{\mathbb{R}^N} |f(x, u_n-u_0)||u_n-u_0|dx
\end{align*}
By $(\ref{ineq6})$ and $\left<J'_\lambda(u_n -u_0), u_n-u_0\right>
\rightarrow 0$, it follows that $||u_n -u_0||_\lambda ^p\rightarrow
0$ as $n\rightarrow \infty$. This shows that $u_n\rightarrow u_0$ in
$E_\lambda$.
\end{proof}
\begin{proof}[\textbf{Proof of the Theorem $\ref{thm2}$}]
By lemmas $\ref{lem1}$, $\ref{lem2}$ and theorem $\ref{thm1}$, it
follows that $c_\lambda =\inf_{E_\lambda} J_\lambda (u)$ is a
critical value of $J_\lambda$, that is there exists a critical point
$u_\lambda \in E_\lambda$ such that $J_\lambda(u_\lambda)=
c_\lambda$. Therefore, $u_\lambda$ is a solution for the problem
$(\ref{ineq1})$ for $\lambda >\Lambda_0$. Now we will show that
$u_\lambda \neq 0$. Let $u^{\#} \in \left(W^{2,p}(\Omega)\cap
W^{1,p}_0(\Omega)\right)\setminus\{0\}$ such that $||u^{\#}||_\infty
\leq 1$, where $\Omega$ is given in the condition (V3). Then by the
condition (F2), we have
\begin{align*}
J_\lambda(tu^{\#}) & = \frac{1}{p}||tu^{\#}||_\lambda ^p -
\int_{\mathbb{R}^N} F(x, tu^*)dx\\
& = \frac{t^p}{p}||u^{\#}||_\lambda^P - \int_\Omega F(x, tu^{\#})dx\\
& \leq \frac{t^p}{p} ||u^{\#}||_\lambda^p -\eta t^{\gamma_0}
\int_\Omega |u^{\#}|^{\gamma_0}dx,
\end{align*}
$\forall t\in(0,\delta)$, where $\delta$ is defined in (F2). Since
$1<\gamma_0 <p$, it follows that $J_\lambda(tu^{\#}) <0$ for $t>0$
small enough. Hence $J_\lambda(u_\lambda) =c_\lambda <0$. Therefore,
$u_\lambda$ is a nontrivial solution of the problem $(\ref{ineq1})$.
\end{proof}
\section{Limiting case $\lambda\rightarrow\infty$}
We consider the limiting case, $\lambda\rightarrow \infty$,  of the
problem $(\ref{ineq1})$ on the set $ V^{-1}(0)$. Define
$\tilde{W}(\Omega) = \begin{cases} u, u \in W^{2,p}(\Omega)\cap
W^{1,p}_0(\Omega)\\
u=0\,\, a.e.\,, \,\,\mbox{in}\,\, R^N\setminus\Omega.
\end{cases}$,\\ where $\Omega$ is given in the condition (V3).
Then $\tilde{W}(\Omega)\subset E_\lambda$ for all $\lambda >0$.
Define
$$\tilde{c}= \inf_{u\in \tilde{W}(\Omega)}
J_\lambda |_{\tilde{W}(\Omega)},$$ where $J_\lambda
|_{\tilde{W}(\Omega)}$ is a restriction of $J_\lambda$ on
$\tilde{W}(\Omega)$, that is
$$J_\lambda |_{\tilde{W}(\Omega)} =
\frac{1}{p}\int_\Omega \left(|\Delta u|^p + |\nabla u|^p\right)dx
-\int_\Omega F(x, u)dx, $$ for $u\in \tilde{W}(\Omega))$. Similar to
the proof of the theorem $\ref{thm2}$, it can be seen that
$\tilde{c}<0$ is achieved and
$$c_\lambda \leq \tilde{c} <0,\,\,\,\mbox{for\,\,all}\,\,\lambda
> \Lambda_0.$$
\begin{proof}[\textbf{Proof of the Theorem $\ref{thm3}$}]
For any sequence $\lambda_n\rightarrow \infty$, let $u_n =
u_{\lambda_n}$ be the critical points of $J_{\lambda_n}$. Thus we
have,
\begin{eqnarray}
c_{\lambda_n} = J_{\lambda_n}(u_n) \leq \tilde{c} <0. \label{ineq8}
\end{eqnarray}
Also in the lemma $\ref{lem1}$, we have seen that
$$J_{\lambda_n}(u_n) \geq \frac{1}{p} ||u_n||_{\lambda_n}^p
-\sum_{i=1}^m
c_p^{\gamma_i}c_0^{\gamma_i}||\xi_i||_{\frac{p}{p-\gamma_i}}||u_n||_{\lambda_n}^{\gamma_i}.$$
Therefore, $(\ref{ineq8})$ and the above inequality implies that
\begin{eqnarray}
||u_n||_{\lambda_n} \leq c, \label{ineq9}
\end{eqnarray}
where the constant $c>0$ is independent of $\lambda_n$. Therefore,
passing on to a subsequence we may assume that $u_n \rightharpoonup
\tilde{u}$ in $E_\lambda$. This implies that $u_n \rightarrow
\tilde{u}$ in $L^q_{loc}(\mathbb{R}^N)$ for $p\leq q <p_*$. Then by
Fatou's lemma we have, $$ \int_{\mathbb{R}^N} V(x) |\tilde{u}|^p dx
\leq \lim_{n\rightarrow \infty}\inf \int_{\mathbb{R}^N} V(x) |u_n|^p
dx \leq \lim_{n\rightarrow \infty}\inf
\frac{||u_n||_{\lambda_n}^p}{\lambda_n}=0,$$ which implies that
$\tilde{u}=0$ a.e. in $\mathbb{R}^N\setminus V^{-1}(0)$. Since
$\Omega =\text{int} V^{-1}(0)$ and $\Omega$ has smooth boundary,
hence $\tilde{u}\in \tilde{W}(\Omega)$. Now for any $\varphi \in
C_0^\infty(\Omega)$, since $\left<J'_{\lambda_n}(u_n),
\varphi\right>=0$, it follows that
$$\int_\Omega \left(|\Delta \tilde{u}|^{p-2}\Delta
\tilde{u}\Delta \varphi + |\nabla\tilde{u}|^{p-2}\nabla u\cdot
\nabla \varphi\right)dx - \int_\Omega f(x, \tilde{u})\varphi dx
=0,$$ which implies that $\tilde{u}$ is a weak solution of the
problem $(\ref{ineq7})$, where we have used the density of
$C_0^\infty(\Omega)$ in
$\tilde{W}(\Omega)$.\\
Next we show that $u_n \rightarrow \tilde{u}$ in
$L^q(\mathbb{R}^N)$. If not, then by Lions Vanishing lemma
\cite{lions, willem}, there exists $\delta>0$, $\rho>0$ and sequence
$\{x_n\} \in \mathbb{R}^N$ with $|x_n|\rightarrow \infty$ such that
$$\int_{B_\rho(x_n)} |u_n - \tilde{u}|^p dx \geq \delta.$$ Since
$|x_n|\rightarrow \infty $, hence $\mu\left(B_\rho(x_n)\cap
V_b\right)\rightarrow 0$ as $n \rightarrow \infty$. Therefore, by
H\"{o}lder inequality, we have
$$\int_ {B_\rho(x_n)\cap V_b} |u_n -
\tilde{u}|^p dx  \leq \mu\left(B_\rho(x_n)\cap
V_b\right)^{\frac{p_*-p}{p_*}}\left(\int_{\mathbb{R}^N} |u_n
-\tilde{u}|^{p_*}dx\right)^{\frac{p}{p_*}} \rightarrow 0,$$ as
$n\rightarrow \infty$. Consequently
\begin{align*}
||u_n||_{\lambda_n}^p & \geq \lambda_n b \int_{B_\rho(x_n)\cap\{x\in
\mathbb{R}^N: V(x)\geq b\}}|u_n|^p dx \\
& = \lambda_n b \int_{B_\rho(x_n)\cap\{x\in \mathbb{R}^N: V(x)\geq
b\}}|u_n- \tilde{u}|^p dx,\,\,\,[\mbox{as}\,\, \tilde{u}=0\,\,
\mbox{a.e.}\,\,\mbox{in}\,\,\,\,\mathbb{R}^N \setminus V^{-1}(0)]\\
& = \lambda_n b \left( \int_{B_\rho(x_n)} |u_n -\tilde{u}|^p dx -
\int_{B_\rho(x_n)\cap V_b}|u_n- \tilde{u}|^p dx + o(1)\right)\\
& \rightarrow \infty,\,\,\mbox{as}\,\,n\rightarrow \infty
\end{align*}
 which contradicts to $(\ref{ineq9})$. Therefore, $u_n \rightarrow \tilde{u}$ in
$L^q(\mathbb{R}^N)$ for $p\leq q<p_*$.\\
Next, we show that $u_n \rightarrow \tilde{u}$ in
$W^{2,p}(\mathbb{R}^N)$. Since $u_n \rightharpoonup \tilde{u}$ in
$E_\lambda$ and $E_\lambda$ compactly embedded in
$L^q_{loc}(\mathbb{R}^N)$, hence by the similar method as in lemma
$\ref{lem2}$, it follows that
\begin{eqnarray}
\int_{\mathbb{R}^N} |f(x, u_n-\tilde{u})||u_n-\tilde{u}|dx
\rightarrow 0,\,\,\mbox{as}\,\,n\rightarrow \infty.\label{ineq10}
\end{eqnarray}
Since $u_n \rightharpoonup \tilde{u}$ in $E_\lambda$, hence
$\left<J'_\lambda(u_n -\tilde{u}), u_n-\tilde{u}\right> \rightarrow
0$ as $n\rightarrow \infty$. But
\begin{align*}
||u_n -\tilde{u}||_\lambda ^p & =\left<J'_\lambda(u_n -\tilde{u}),
u_n-\tilde{u}\right> +
\int_{\mathbb{R}^N} (f(x, u_n-\tilde{u}))(u_n-\tilde{u})dx\\
& \leq \,\left<J'_\lambda(u_n -\tilde{u}), u_n-\tilde{u}\right> +
\int_{\mathbb{R}^N} |f(x, u_n-\tilde{u})||u_n-\tilde{u}|dx
\end{align*}
Therefore, by $(\ref{ineq10})$ and the above inequality, it follows
that $||u_n -\tilde{u}||_\lambda \rightarrow 0$ as $n\rightarrow
\infty$. Again by the lemma $\ref{lem3}$,
$$||u_n-\tilde{u}||_{W^{2,p}(\mathbb{R}^N)} \leq c_0
||u_n -\tilde{u}||_\lambda,$$ hence we have $u_n \rightarrow
\tilde{u}$ in $W^{2,p}(\mathbb{R}^N)$.\\
From $(\ref{ineq8})$, we have $$\frac{1}{p}\int_\Omega \left(|\Delta
\tilde{u}|^p + |\nabla \tilde{u}|^p\right)dx -\int_\Omega F(x,
\tilde{u})dx \leq \tilde{c} <0,$$ which implies that $\tilde{u}\neq
0$. This completes the theorem.
\end{proof}
\section*{Acknowledgement}
 One of the authors (R. Kr. Giri) thanks the financial assistantship received from
 the Ministry of Human Resource Development
 (M.H.R.D.), Govt. of India.

\bibliographystyle{amsplain}

\begin{thebibliography}{99}
\bibitem{liu}
{\sc J. Liu}, {\sc ShaoXiong Chen} and {\sc X. Wu}, Existence and
multiplicity of solutions for fourth-order elliptic equations in
$\mathbb{R}^N$, {\it J. Math. Anal. Appl.}, 395 (2012), 608-615.

\bibitem{we}
{\sc J. Zhang} and {\sc Z. Wei}, Infinitely many nontrivial
solutions for a class of biharmonic equations via variant fountain
theorems, {\it Nonlinear Anal.}, 75 (2011), 7474-7485.

\bibitem{bhakta}
{\sc M. Bhakta}, Entire solutions for a class of elliptic equations involving $p$-biharmoic operator and Rellich potentials,
{\it J. Math. Anal. Appl.}, 423 (2015), 1570-1579.

\bibitem{shen}
{\sc M. B. Yang} and {\sc Z. F. Shen}, Infinitely many solutions for
fourth order elliptic equations in $\mathbb{R}^N$, {\it Acta Math.
Sin. (Engl. Ser.)}, 24 (2008), 1269-1278.

\bibitem{willem}
{\sc M. Willem}, Minimax Theorems, Bikh\"{a}user, Basel (1996).

\bibitem{rabi}
{\sc P. H. Rabinowitz}, Minimax methods in critical point theory
with applications to differential equations, {\it CBMS Regional
Conf. Ser. in. Math., 65, American Mathematical Society, Providence,
RI, 1986}.

\bibitem{lions}
{\sc P. L. Lions}, The concentration-compactness principle in the
calculus of variation, The local compact case Part I, {\it Ann.
Inst.. H. Poincar\'{e} Anal. Nonlin\'{e}aire}, 1 (1984), 109-145.

\bibitem{pank}
{\sc T. BARTSCH}, {\sc A. PANKOV} and {\sc Z. WANG}, Nonlinear
Schr\"{o}dinger equations with steep potential well, {\it Commun.
Contemp. Math.}, 3 (2001), 549-569.

\bibitem{zhang}
{\sc W. Zhang}, {\sc X. Tang} and {\sc J. Zhang}, Existence and
Concentration of solutions for sublinear fourth-order elliptic
equations, {\it Elec. J. Differ. Equa.},Vol. 2015 (2015), No.3, 1-9.

\bibitem{ding}
{\sc Y. Ding} and {\sc A. Szulkin}, Existence and number of
solutions for a class of semilinear Schr\"{o}dinger equations, {\it
Progress in Nonlinear Diff. Equations and Their Applications}, 66
(2005), 221-231.

\bibitem{wei}
{\sc Y. H. Wei}, Multiplicity results for some fourth-order elliptic
equations, {\it J. Math. Anal. Appl.}, 385 (2012), 797-807.

\bibitem{yin}
{\sc Y. L. Yin} and {\sc X. Wu}, High energy solutions and
nontrivial solutions for fourth-order elliptic equations, {\it J.
Math. Anal. Appl.}, 375 (2011), 699-705.

\bibitem{ye1}
{\sc Y. W. Ye} and {\sc C. L. Tang}, Infinitely many solutions for
fourth-order elliptic equations, {\it J. Math. Anal. Appl.}, 394
(2012), 841-854.

\bibitem{yang}
{\sc Y. Yang} and {\sc J. Zhang}, Existence of solutions for some
fourthorder nonlinear elliptic problems, {\it J. Math. Anal. Appl.},
351 (2009), 128-137.

\bibitem{ye}
{\sc Y. ye} and {\sc Chun-Lei Tang}, Existence and multiplicity of
solutions for fourth-order elliptic equations in $\mathbb{R}^N$,
{\it J. Math. Anal. Appl.}, 406 (2013), 335-351.

\end{thebibliography}

\end{document}